\LetLtxMacro\orgvdots\vdots
\LetLtxMacro\orgddots\dots
\DeclareRobustCommand\vdots{%
	\mathpalette\@vdots{}%
}
\newcommand*{\@vdots}[2]{%
	\sbox0{$#1\cdotp\cdotp\cdotp\m@th$}%
	\sbox2{$#1.\m@th$}%
	\vbox{%
		\dimen@=\wd0 %
		\advance\dimen@ -3\ht2 %
		\kern.5\dimen@
		\dimen@=\wd2 %
		\advance\dimen@ -\ht2 %
		\dimen2=\wd0 %
		\advance\dimen2 -\dimen@
		\vbox to \dimen2{%
			\offinterlineskip
			\copy2 \vfill\copy2 \vfill\copy2 %
		}%
	}%
}
\DeclareRobustCommand\ddots{%
	\mathinner{%
		\mathpalette\@ddots{}%
		\mkern\thinmuskip
	}%
}
\newcommand*{\@ddots}[2]{%
	\sbox0{$#1\cdotp\cdotp\cdotp\m@th$}%
	\sbox2{$#1.\m@th$}%
	\vbox{%
		\dimen@=\wd0 %
		\advance\dimen@ -3\ht2 %
		\kern.5\dimen@
		\dimen@=\wd2 %
		\advance\dimen@ -\ht2 %
		\dimen2=\wd0 %
		\advance\dimen2 -\dimen@
		\vbox to \dimen2{%
			\offinterlineskip
			\hbox{$#1\mathpunct{.}\m@th$}%
			\vfill
			\hbox{$#1\mathpunct{\kern\wd2}\mathpunct{.}\m@th$}%
			\villi
			\hbox{$#1\mathpunct{\kern\wd2}\mathpunct{\kern\wd2}\mathpunct{.}\m@th$}%
		}%
	}%
}
\newtheorem{theorem}{Theorem}
\newtheorem{proposition}[theorem]{Proposition}
\newtheorem{conjecture}[theorem]{Conjecture}
\newtheorem{question}[theorem]{Question}
\theoremstyle{definition}
\newtheorem{example}[theorem]{Example}
\newcommand\sO{{\mathcal O}}
\def\geq{\geqslant}
\def\le{\leqslant}
\def\ge{\geqslant}
\newcommand{\PP}{\ensuremath{\mathbb{P}}} 
\newcommand{\holom}[3]{\ensuremath{#1:#2  \rightarrow #3}}
\begin{document}
 
\title[A question on effective strictly nef divisors (with an appendix by Andreas H\"oring)]{A question on effective strictly nef divisors \\ (with an appendix by Andreas H\"oring)}
 
\author{Claudio Fontanari}

\address{Claudio Fontanari, Dipartimento di Matematica, Universit\`a degli Studi di Trento, Via Sommarive 14, 38123 Povo, Trento}
\email{claudio.fontanari@unitn.it}

\address{Andreas H\"oring, Universit\'e C\^ote d'Azur, CNRS, LJAD, France, Institut universitaire de France}
\email{Andreas.Hoering@univ-cotedazur.fr}

\subjclass[2010]{Primary 14C20; Secondary 14J40}
 
 
\centerline{}
\begin{abstract} 
We introduce and motivate the following question: Is every effective strictly nef Cartier divisor on a projective variety big? In the appendix, Andreas H\"oring produces a counterexample, thus providing a negative answer.
\end{abstract}

\maketitle 

\section{Introduction} 
Let $X$ be a complex projective variety of dimension $n$. 
A Cartier divisor $D$ on $X$ is called \emph{strictly nef} if it has strictly positive intersection product with every curve on $X$. 
Every ample divisor is indeed strictly nef, but after the classical examples by Mumford and Ramanujam 
(see \cite{H}, Chapter I., Examples 10.6 and 10.8) it is well known that the converse does not hold. On the other hand, 
a deep conjecture by Serrano predicts that every strictly nef divisor on a projective manifold becomes ample after a suitable deformation 
in the direction of the canonical divisor $K_X$:

\begin{conjecture} \label{serrano} \emph{(\cite{S})}
If $D$ is a strictly nef divisor on a projective manifold $X$ then $K_X + tD$ is ample for every $t > n+1$. 
\end{conjecture}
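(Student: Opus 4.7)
The plan is to combine integrality of Cartier intersection numbers with Mori's cone theorem to control positivity of $K_X + tD$, and then to upgrade strict nefness to ampleness via base-point-free techniques. The first, elementary observation is that since $D$ is Cartier, the integer $D \cdot C$ is strictly positive, hence $D \cdot C \geq 1$, for every irreducible curve $C \subset X$. Combined with the length bound of Mori's cone theorem --- each $K_X$-negative extremal ray of $\overline{NE}(X)$ is generated by a rational curve $C$ with $-K_X \cdot C \leq n+1$ --- this yields
\[
(K_X + tD) \cdot C \geq -(n+1) + t > 0
\]
on every $K_X$-negative extremal curve as soon as $t > n+1$, while on $K_X$-nonnegative curves the inequality is immediate. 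Hence $K_X + tD$ is at least strictly nef in the conjectured range.

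The core of the argument is then to upgrade strict nefness to ampleness through the Nakai--Moishezon criterion, which requires $(K_X + tD)^k \cdot V > 0$ for every $k$-dimensional subvariety $V \subseteq X$. The natural attack is induction on $n = \dim X$: if one can show that $K_X + tD$ is big, the Kawamata--Shokurov base-point-free theorem renders it semi-ample, and any contracted curve would have zero intersection with $K_X + tD$, contradicting the strict nefness just established. Restricting $D$ to a general hyperplane section, or to a suitable divisor linearly equivalent to a high multiple of $D$ when one is available, and invoking adjunction $K_Y = (K_X + Y)|_Y$ should plug into the inductive hypothesis and produce positivity of $(K_X + tD)^{k} \cdot V$ for all proper $V$.

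The main obstacle, and the reason Serrano's conjecture has resisted proof beyond surfaces and a handful of cases in dimension three, is the branch in which $D$ is not itself big. As the appendix to the present paper shows, this branch genuinely occurs, even under the strong additional assumption that $D$ is effective, so no induction may presuppose bigness at the outset. Handling non-big strictly nef divisors forces one to analyze the nef reduction fibration $f \colon X \dashrightarrow Y$ attached to $D$, and to run a relative Minimal Model Program on the generic fibre. Whether the positivity provided by strict nefness is enough to make this machinery converge --- controlling singular and multiple fibres, the discriminant locus of $f$, and the behaviour of $K_{X/Y} + tD$ on fibres --- is precisely the point at which the classical toolkit stalls and where genuinely new input would be required.
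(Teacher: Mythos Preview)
The statement you were asked to prove is not a theorem in the paper: it is Serrano's conjecture, explicitly introduced as an open problem and described as ``still widely open'' beyond surfaces and a few special cases. The paper contains no proof of it, so there is nothing on the paper's side to compare your argument against.

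Your proposal is not a proof either, and to your credit you say so in the final paragraph. What you have written is an accurate outline of the known partial progress: the first step, that $K_X + tD$ is strictly nef for $t > n+1$, is correct and is essentially Serrano's own lemma. (Your phrasing is slightly loose: checking positivity only on $K_X$-negative \emph{extremal} curves and on $K_X$-nonnegative curves does not literally cover all curves, since a $K_X$-negative curve need not be extremal. One needs the cone decomposition $\overline{NE}(X)=\overline{NE}(X)_{K_X\ge 0}+\sum \mathbb{R}_{\ge 0}[C_i]$ and the observation that if $(K_X+tD)\cdot C=0$ then every summand in the decomposition of $[C]$ must vanish, forcing $[C]\in\overline{NE}(X)_{K_X\ge 0}$ and hence a contradiction. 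This is a routine fix.)

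The genuine gap is exactly the one you identify: passing from strict nefness to ampleness. Your proposed route via the base-point-free theorem requires $K_X+tD$ to be big, and bigness of $K_X+tD$ does not follow from strict nefness of $D$ alone. The inductive scheme you sketch---restricting to a member of $|mD|$---presupposes that such members exist, i.e.\ that $D$ is at least effective, and even then (as the paper's appendix demonstrates) an effective strictly nef divisor need not be big, so the induction has no base. In short, your write-up is a fair survey of why the conjecture is hard, but it is not a proof, and the paper never claimed to have one.
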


Serrano's Conjecture \ref{serrano} holds for surfaces (see \cite{S}), for threefolds with the unique possible exception of Calabi-Yau's
with $D.c_2=0$ (see \cite{S} and \cite{CCP}), for K-trivial fourfolds (see \cite{LM}), and for projective manifolds of 
Kodaira dimension at least $n-2$ (see \cite{CCP}). Otherwise, Conjecture \ref{serrano} is still widely open. 

A weaker version, involving only effective strictly nef divisors, was independently formulated by Beltrametti and Sommese in \cite{BS}, p. 15: 

\begin{conjecture} \label{bs} \emph{(\cite{BS})}
Let $D$ be an effective strictly nef divisor on a projective manifold $X$. If $D-K_X$ is nef then $D$ is ample. 
\end{conjecture}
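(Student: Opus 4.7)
The plan is to reduce the statement to showing that $D$ is big, and then to attack bigness using the hypotheses that $D$ is effective and $D-K_X$ is nef.

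\textbf{Step 1 (bigness implies ampleness).} Suppose $D$ is big. Because $D-K_X$ is nef, the divisor
\[
aD - K_X = (a-1)D + (D - K_X)
\]
is nef for every $a\geq 1$ and big for every $a\geq 2$. The Kawamata--Shokurov base point free theorem then yields that $|mD|$ is base point free for $m$ sufficiently divisible, and hence defines a morphism $\phi\colon X\to Y$. Any curve $C$ contracted by $\phi$ satisfies $D\cdot C=0$, contradicting strict nefness. Hence $\phi$ is finite, $mD$ is ample, and so is $D$. Observe that the very same argument applies, without any bigness assumption on $D$, whenever $D-K_X$ itself is big, since then $aD-K_X$ is already nef and big for $a=1$.

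\textbf{Step 2 (bigness of $D$).} This is where the principal obstacle lies. By Step 1 one may assume that neither $D$ nor $D-K_X$ is big, so that $D^n=0=(D-K_X)^n$, and in particular the numerical dimension $\nu(D)$ is at most $n-1$. The sub-case $\nu(D)=n-1$ should follow from strict nefness combined with effectiveness: since $D$ is effective, $D^{n-1}$ is a nonzero limit of effective $1$-cycles, and intersecting with $D$ one expects $D^n>0$. Making this rigorous in the merely strictly nef, non-ample regime is a technical point that I would handle through a careful moving-cone argument, possibly after pushing $D$ into the interior of the nef cone by adding a small multiple of an ample class.

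The genuinely hard sub-case is $\nu(D)\leq n-2$. My first attempt would be induction on $\dim X$: restricting to a prime component $S$ of $D$ (which exists since $D$ is effective), one has that $D|_S$ is effective and strictly nef on $S$, and by adjunction
\[
(D-K_S)|_S = (D-K_X)|_S - S|_S .
\]
The sticking point is that the normal bundle $\sO_S(S)$ is not a priori anti-nef, so the inductive hypothesis does not apply directly to $(S,D|_S)$. Circumventing this --- either by perturbing $D$ with a small multiple of $D-K_X$ to absorb the normal bundle term, or by invoking the non-vanishing theorem on $D-K_X$ to produce an auxiliary effective divisor whose restriction to $S$ compensates $S|_S$, or by bootstrapping with extremal ray contractions in the sense of Mori theory --- appears to be the key technical difficulty, and without such an extra geometric input (in the spirit of Campana--Chen--Peternell or Serrano's own threefold analysis) the conjecture seems to remain out of reach.
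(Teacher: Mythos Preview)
The statement you are trying to prove is labelled \emph{Conjecture} in the paper; the paper does not claim a proof, and indeed treats it as open. What the paper does establish is exactly your Step~1: if $D$ is already big, then the base-point-free theorem together with strict nefness forces ampleness (this is the paper's Proposition~4, proved via Serrano's Lemma~1.3). So your Step~1 is correct and matches the paper's partial result.

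The gap is entirely in Step~2, and it is more serious than you indicate. Your heuristic for the sub-case $\nu(D)=n-1$ --- that $D^{n-1}$ is a nonzero limit of effective $1$-cycles and so pairing with the strictly nef class $D$ should give $D^n>0$ --- is false, and the paper's Appendix produces a concrete counterexample. There a prime divisor $Y\subset X=\PP(W)$ on a threefold is effective and strictly nef with $Y^3=0$, while $Y^2\cdot p^*A>0$ for $A$ ample on the base curve, so $\nu(Y)=2=n-1$. The class $Y^2$ lies on the boundary of the closed cone of curves, and strict nefness says nothing about such boundary classes; your ``moving-cone argument, possibly after pushing $D$ into the interior of the nef cone'' cannot rescue this, because perturbing $D$ destroys the equality $D^n=0$ you are trying to contradict, while the unperturbed intersection $D\cdot D^{n-1}$ is genuinely zero. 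In particular, any successful attack on the $\nu(D)=n-1$ case must already use the hypothesis that $D-K_X$ is nef in an essential way, not merely the effectiveness and strict nefness of $D$. Your outline treats this hypothesis as relevant only for the inductive step, which is why the argument cannot close.
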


On the other hand, if the strictly nef divisor $D$ is also big, then Conjecture \ref{serrano} holds for $D$, just by applying \cite{S}, Lemma 1.3: 

\begin{proposition}
If $D$ is a big strictly nef divisor on a projective manifold $X$ then $K_X + tD$ is ample for every $t > n+1$. 
\end{proposition}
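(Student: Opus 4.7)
The plan is to invoke Serrano's Lemma 1.3 in \cite{S}, which is tailored to this situation. Let me outline the two ingredients entering that lemma: Mori's cone theorem, which controls $K_X$-negative curves, and the Kawamata--Shokurov base-point-free theorem, which converts nefness into ampleness using the bigness of $D$.

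First, I would verify that $K_X+tD$ is nef for $t>n+1$. By the cone theorem, every $K_X$-negative extremal ray of $\overline{NE}(X)$ is generated by a rational curve $\ell$ with $-K_X\cdot\ell\leq n+1$. Since $D$ is an integral strictly nef divisor, $D\cdot\ell\geq 1$, hence
\[
(K_X+tD)\cdot\ell \;\geq\; t-(n+1) \;>\;0.
\]
For an arbitrary irreducible curve $C$, decompose $[C]=\sum a_i[\ell_i]+\beta$ with $a_i\geq 0$ and $\beta$ in the $K_X$-nonnegative part of $\overline{NE}(X)$. Then $(K_X+tD)\cdot\beta\geq 0$, since both $K_X\cdot\beta\geq 0$ and $D\cdot\beta\geq 0$ on this subcone, so $(K_X+tD)\cdot C\geq 0$; equality would force $[C]=\beta$ and $D\cdot C=0$, contradicting strict nefness of $D$.

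Next, since $(K_X+tD)-K_X=tD$ is nef and big by hypothesis, the base-point-free theorem guarantees that $|m(K_X+tD)|$ is base-point-free for $m\gg 0$. Hence $K_X+tD$ is semi-ample, inducing a morphism $\phi\colon X\to Y$ with $K_X+tD\equiv\phi^*A$ for some ample divisor $A$ on $Y$. By the first step $\phi$ contracts no curve, so $\phi$ is finite and $K_X+tD$ is ample, as required.

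The delicate step is the first one: passing from strict positivity on the (at most countably many) $K_X$-negative extremal generators to strict positivity on every curve class. The cone-theoretic decomposition, combined with strict nefness, dispatches this uniformly. Bigness enters only in the second step, but decisively, as the hypothesis driving the base-point-free theorem; without it one could not upgrade the resulting nef divisor to a semi-ample one.
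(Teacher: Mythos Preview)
Your proposal is correct and follows exactly the paper's approach: both the paper and you invoke \cite{S}, Lemma~1.3. The paper's proof consists solely of that citation, while you go further and sketch the standard mechanism behind Serrano's lemma (cone theorem to obtain nefness of $K_X+tD$, then the base-point-free theorem applied with the nef and big divisor $tD=(K_X+tD)-K_X$ to upgrade to ampleness). There is nothing to add.
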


Furthermore, if $D - K_X$ is nef then from the ampleness of $K_X + tD$ it follows that also $D$ is ample, 
hence Conjecture \ref{bs} holds for big strictly nef divisors as well:

\begin{proposition}
If $D$ is a big strictly nef divisor on a projective manifold $X$ and $D-K_X$ is nef then $D$ 
is ample. 
\end{proposition}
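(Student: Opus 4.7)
The plan is to combine the preceding proposition with a direct linear-combination argument. By the previous proposition, since $D$ is big and strictly nef, $K_X + tD$ is ample for every $t > n+1$; in particular we may fix such a $t$ (say $t = n+2$).

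Next I would write the identity
\[
(t+1) D \;=\; (K_X + tD) \;+\; (D - K_X).
\]
The first summand on the right is ample by the previous proposition, and the second summand is nef by hypothesis. Since the sum of an ample divisor and a nef divisor is ample, $(t+1)D$ is ample; because $t+1 > 0$, this forces $D$ itself to be ample.

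There is essentially no obstacle here: the only non-trivial input is the ampleness of $K_X + tD$ coming from the previous proposition (which, in turn, rests on Serrano's Lemma 1.3 in \cite{S}), and the rest is just the elementary fact that $\operatorname{Amp}(X) + \operatorname{Nef}(X) \subseteq \operatorname{Amp}(X)$ inside the N\'eron--Severi space.
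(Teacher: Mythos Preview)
Your proof is correct and is exactly the argument the paper has in mind: the paper simply states that from the ampleness of $K_X+tD$ and the nefness of $D-K_X$ one concludes that $D$ is ample, and your identity $(t+1)D=(K_X+tD)+(D-K_X)$ is precisely the way to make this implication explicit.
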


Finally, also the singular version of Conjecture \ref{serrano} (see \cite{CCP}, 
Conjecture 1.3, and \cite{LOWYZ}, Question 1.4) holds for big strictly nef $\mathbb{Q}$-Cartier divisors. 
Namely, by applying \cite{LOWYZ}, Lemma 5.2 and Lemma 5.3, we deduce: 

\begin{proposition}
If $(X,\Delta)$ is a projective klt pair of dimension $n$ and $D$ is a big strictly nef $\mathbb{Q}$-Cartier divisor on $X$ then $K_X + \Delta + tD$ is ample for every $t >> 0$. 
\end{proposition}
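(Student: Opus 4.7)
The plan is to reduce the statement to a direct verification of the hypotheses of \cite{LOWYZ}, Lemmas 5.2 and 5.3, which are engineered precisely for ampleness of adjoint-type divisors twisted by a large multiple of a nef $\mathbb{Q}$-Cartier divisor on a klt pair. The strategy splits curves on $X$ according to their position relative to the augmented base locus $\mathbb{B}_+(D)$.

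First, since $D$ is big and $\mathbb{Q}$-Cartier, Kodaira's lemma yields a $\mathbb{Q}$-linear equivalence $D \sim_{\mathbb{Q}} A + E$ with $A$ ample and $E$ effective. Consequently $\mathbb{B}_+(D)$ is a proper Zariski-closed subset of $X$, contained in $\mathrm{Supp}(E)$. On any curve $C \not\subset \mathbb{B}_+(D)$, the intersection $(K_X+\Delta+tD)\cdot C$ is positive for $t$ sufficiently large: after absorbing the effective contribution of $E$, one compares directly with the ample $\mathbb{Q}$-divisor $K_X+\Delta+tA$.

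The delicate case — and the content of the cited lemmas — concerns curves $C \subset \mathbb{B}_+(D)$, where the ample part $A$ degenerates and cannot be exploited. Here strict nefness $D\cdot C>0$ is the indispensable hypothesis: Lemma 5.3 of \cite{LOWYZ} combines it with adjunction on klt subpairs of lower dimension to produce a uniform upper bound for the ratio $(K_X+\Delta)\cdot C / D \cdot C$ on such curves. Lemma 5.2 then assembles the two regimes to ensure that $(K_X+\Delta+tD)\cdot C>0$ for \emph{every} curve $C$ and all $t\gg 0$; since $K_X+\Delta+tD$ is moreover nef and big for such $t$, the Nakai--Moishezon criterion delivers ampleness.

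The main obstacle is precisely the uniform control inside $\mathbb{B}_+(D)$: nothing in the ambient geometry forces the ratio $(K_X+\Delta)\cdot C/D\cdot C$ to be bounded above on a whole family of curves, and one must exploit strict nefness together with a Noetherian induction on the components of the augmented base locus. This is the reason for invoking the two cited lemmas rather than attempting a direct argument on $X$.
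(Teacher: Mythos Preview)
Your approach---reducing everything to \cite{LOWYZ}, Lemmas~5.2 and~5.3---is exactly what the paper does; indeed the paper offers no argument beyond citing those two lemmas, so your proposal is if anything more detailed than the original. The decomposition of curves according to $\mathbb{B}_+(D)$ and the use of strict nefness inside versus bigness outside is a reasonable sketch of the mechanism.

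There is, however, a genuine slip in your last sentence. Knowing that $K_X+\Delta+tD$ is nef, big, and strictly positive on every curve does \emph{not} allow you to invoke Nakai--Moishezon: that criterion requires $(K_X+\Delta+tD)^{\dim V}\cdot V>0$ for \emph{all} irreducible subvarieties $V$, not just curves, and in dimension $\geq 3$ a big nef divisor can be strictly positive on every curve yet have vanishing top self-intersection on some surface (a Mumford-type restriction). The correct bridge in the klt setting is the base-point-free theorem: once you have established that $L:=K_X+\Delta+tD$ is nef, the difference $L-(K_X+\Delta)=tD$ is nef and big, so $|mL|$ is free for $m\gg 0$; the associated morphism contracts no curve by the strict positivity you already proved, hence is finite, hence $L$ is ample. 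This step is presumably absorbed into the cited lemmas, but as written your final clause mis-identifies the mechanism and would not stand on its own.
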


From this point of view, it is remarkable that all examples known so far of strictly nef divisors (see in particular \cite{BS}) have either negative or maximal Iitaka dimension. This experimental observation suggests the following question:

\begin{question} \label{big}
Is every effective strictly nef Cartier divisor on a projective variety big?
\end{question}

Even though this is trivially true for surfaces, in higher dimension the answer turns out to be far less obvious, so it seems wise to adopt a fully agnostic attitude. 

As a starting point, we recall that from the case of surfaces it follows a partial positive result in any dimension (see \cite{C}, Proposition 22): 
if $D$ is a strictly nef Cartier divisor on a projective variety of dimension $n$ with Iitaka dimension $\kappa(D) \ge n-2$ then $D$ is big. 
Hence the first instance to be addressed is the one of a strictly nef divisor $D$ with $\kappa(D) =0$  on a threefold. 
We point out that, once this case were ruled out in arbitrary dimension, then the whole picture would become clear. 
Namely, we formulate the following a priori weaker question: 

\begin{question} \label{zero}
Does every effective strictly nef Cartier divisor $D$ on a projective variety $X$ satisfy $h^0(X,mD) \ge 2$ for some $m \ge 1$?
\end{question}

We show that an affirmative answer to Question \ref{zero} would imply an affirmative answer to Question \ref{big}:

\begin{theorem} \label{main} 
Assume that every effective strictly nef Cartier divisor $D$ on every projective variety $X$ of dimension 
$\dim(X) \le n$ satisfies $h^0(X,mD) \ge 2$ for some $m \ge 1$.
Then every effective strictly nef Cartier divisor on a projective variety of dimension $n$ is big. 
\end{theorem}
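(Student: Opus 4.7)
The plan is to argue by contradiction, using the Iitaka fibration of $D$ to produce a lower-dimensional test variety on which the standing hypothesis applies and yields a numerical contradiction. Let $D$ be an effective strictly nef Cartier divisor on a projective variety $X$ of dimension $n$, and assume for contradiction that $D$ is not big. I would first apply the hypothesis to the pair $(X, D)$ itself, which is permissible because the hypothesis covers all dimensions up to $n$: this produces some $m \ge 1$ with $h^0(X, mD) \ge 2$, hence $\kappa(X, D) \ge 1$. Combined with non-bigness, this places us in the range $1 \le \kappa(X, D) \le n-1$.

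The key reduction is then to pass to a general fiber of the Iitaka fibration $\phi : X \dashrightarrow Y$ of $D$. After replacing $X$ by a resolution of indeterminacy and performing Stein factorization, $\phi$ becomes a morphism with connected fibers onto a variety $Y$ with $\dim Y = \kappa(X,D)$, and a very general irreducible fiber $V$ has positive dimension $n - \kappa(X, D) \ge 1$. By Iitaka's theorem one has $\kappa(V, D|_V) = 0$, so some multiple $m_0\, D|_V$ admits a nonzero section, yielding an effective Cartier divisor $E$ on $V$ with $E \sim m_0\, D|_V$. Strict nefness is inherited by restriction, since every curve $C \subset V$ is a curve in $X$ and
\[
E \cdot C = m_0\, D|_V \cdot C = m_0\, D \cdot C > 0;
\]
thus $E$ is an effective strictly nef Cartier divisor on the projective variety $V$ of dimension strictly less than $n$.

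The standing hypothesis now applies to the pair $(V, E)$ and yields some $k \ge 1$ with $h^0(V, kE) \ge 2$; since $kE \sim km_0\, D|_V$, this forces $h^0(V, km_0\, D|_V) \ge 2$, directly contradicting $\kappa(V, D|_V) = 0$. Hence $D$ must be big. The main technical obstacle is the middle step: one has to verify that a general fiber of the Iitaka fibration can be realized as an irreducible projective subvariety $V \subset X$ on which the class $D|_V$ admits a genuinely effective representative with the asserted numerical behaviour. This is standard once one works on a resolution of $X$ where the Iitaka fibration is a morphism with connected fibers and invokes birational invariance of the Iitaka dimension (passing to the normalization of $V$ if needed); all remaining steps are then formal consequences of strict nefness and linear equivalence.
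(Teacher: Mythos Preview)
Your argument is correct, but it proceeds along a genuinely different route from the paper. The paper argues by induction on $n$: given $D$ on $V$ with $h^0(V,mD)\ge 2$, it picks a component $E_i$ of some $E\in|mD|$ that is not fixed, so that $mD|_{E_i}$ is effective and strictly nef; the inductive hypothesis then makes $D|_{E_i}$ big, while $D^n=0$ forces $D^{n-1}\cdot E_i=0$, a contradiction. Your proof instead avoids induction entirely: you invoke the Iitaka fibration, restrict to a very general fiber $V$ where $\kappa(V,D|_V)=0$, and then apply the standing hypothesis directly to $(V,D|_V)$ to obtain $h^0(V,m'D|_V)\ge 2$, contradicting $\kappa=0$.

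The trade-off is that the paper's argument is more elementary---it needs only that a linear system of positive dimension has a moving component and the basic inequality $D^{n-1}\cdot E_i\ge 0$ for nef $D$---whereas your route requires Iitaka's theorem and some birational bookkeeping. The point you flag as the ``main technical obstacle'' is real: on the resolution $X'$ the pullback $\mu^*D$ is no longer strictly nef, so you must realise the fiber as the image $V=\mu(V')\subset X$ and check that $h^0(V,mD|_V)\le h^0(V',m\mu^*D|_{V'})$ via the birational surjection $V'\to V$ (this gives $\kappa(V,D|_V)\le 0$, and effectivity of $D|_V$ for general $V$ gives equality). Once this is said, your argument is complete; it has the pleasant feature of not needing the conclusion in lower dimension, only the hypothesis.
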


We also present an unconditional  result pointing towards the same direction: 

\begin{theorem} \label{affine}
If $D$ is an effective strictly nef Cartier divisor on a projective variety and the schematic base locus of $\vert m D \vert$ becomes constant for large $m$ then $D$ is big.
\end{theorem}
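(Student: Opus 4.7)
The strategy is to log-resolve the stable base scheme, extract from $|mD|$ a sequence of base-point-free divisors approximating $\pi^\ast D$ numerically, and then use the strict nefness of $D$ to rule out positive-dimensional fibers for the Iitaka fibration. Concretely, let $B$ denote the stable scheme-theoretic base locus and choose a birational morphism $\pi\colon\tilde X\to X$ from a smooth projective variety such that $\pi$ principalizes the ideal $\mathcal{I}_B$, say $\pi^{-1}\mathcal{I}_B\cdot\mathcal{O}_{\tilde X}=\mathcal{O}_{\tilde X}(-F)$ for an effective Cartier divisor $F$, and such that the Iitaka fibration $\tilde J\colon\tilde X\to Z$ of $\pi^\ast D$ is a morphism. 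The scheme-theoretic stability of the base locus means that the fixed divisor of the pulled-back system $\pi^\ast|mD|$ equals the single divisor $F$ for every $m\ge m_0$, and therefore
\[
M_m\;:=\;\pi^\ast(mD)-F
\]
is a base-point-free Cartier divisor on $\tilde X$ for each such $m$; denote by $\phi_m\colon\tilde X\to\mathbb P^{N_m}$ the associated morphism.

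Arguing by contradiction, suppose $D$ is not big and set $k:=\kappa(D)<n$, so that a general fiber of $\tilde J$ has positive dimension $n-k$. By Iitaka's theorem there exists an integer $d\ge 1$ such that for all $m\ge m_0$ with $d\mid m$ the Stein factorization of $\phi_m$ coincides with $\tilde J$; in particular $M_m=\tilde J^\ast B_m$ for some ample Cartier divisor $B_m$ on $Z$. Restricting to a general fiber $\Phi=\tilde J^{-1}(z)$ one obtains $M_m|_\Phi\simeq\mathcal{O}_\Phi$, equivalently
\[
\pi^\ast(mD)|_\Phi\;\simeq\;F|_\Phi\quad\text{in }\operatorname{Pic}(\Phi).
\]
Applied to two distinct divisible integers $m\neq m'$ both $\ge m_0$ and subtracted, this gives $(m-m')\pi^\ast D|_\Phi\sim 0$, so $\pi^\ast D|_\Phi$ is torsion in $\operatorname{Pic}(\Phi)$ and in particular numerically trivial on $\Phi$.

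Since the fibers of $\tilde J$ cover $\tilde X$ and $\dim\operatorname{Exc}(\pi)\le n-1$, a general fiber $\Phi$ is not contained in $\operatorname{Exc}(\pi)$; hence one can choose an irreducible curve $C\subset\Phi$ not contained in $\operatorname{Exc}(\pi)$, taking $C=\Phi$ when $\dim\Phi=1$ and a general curve in $\Phi$ otherwise. Because $C$ is not contracted by $\pi$, its image $\pi(C)$ is an honest curve on $X$, so strict nefness of $D$ forces
\[
\pi^\ast D\cdot C\;=\;D\cdot\pi_\ast C\;>\;0,
\]
in direct contradiction with the numerical triviality of $\pi^\ast D|_\Phi$ just obtained. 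Therefore $D$ is big. The main difficulty is the step identifying the Stein factorization of $\phi_m$ with the Iitaka fibration $\tilde J$ for sufficiently divisible $m\ge m_0$; this is precisely where the \emph{scheme-theoretic} stability of the base locus (rather than merely the stability of the reduced base locus) is decisive, since it forces the divisor $F$ to be independent of $m$ and thus allows the semi-ample $\mathbb Q$-divisors $L_m:=\pi^\ast D-F/m$ to be compared along one and the same family of Iitaka fibers as $m\to\infty$.
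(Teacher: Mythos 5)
Your proof is correct, but it follows a genuinely different route from the one in the paper. The paper's argument is a two-step reduction to the literature: since $D$ is effective and strictly nef, the complement $U=X\setminus\operatorname{Supp}(D)$ contains no complete curves, hence is affine by \cite{H}, Ch.~II, Theorem~5.1, and bigness then follows from Goodman's ampleness criterion for affine complements (\cite{H}, Ch.~II, Theorem~6.1) in the form extracted in \cite{Z}, Theorem~3.1 --- that is where the stabilization of the schematic base locus enters. You instead principalize the stable base ideal on a resolution $\pi\colon\tilde X\to X$, use the scheme-theoretic (not merely set-theoretic) stability to make the fixed part $F$ of $\pi^*|mD|$ independent of $m$, so that the free parts $M_m=\pi^*(mD)-F$ differ by multiples of $\pi^*D$, and then play the resulting numerical triviality of $\pi^*D$ on a common general Iitaka fiber against strict nefness via a curve not contracted by $\pi$. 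Both arguments are sound; yours is self-contained modulo standard Iitaka-fibration theory and makes transparent exactly why the hypothesis must be schematic, while the paper's is much shorter and exhibits the link with affineness of the complement. Two minor points you should flag in a final write-up: (i) the stable-model theorem for the Iitaka fibration is usually stated for $\kappa\geq 1$; when $\kappa(D)=0$ one has $h^0(X,mD)=1$ for all $m$, so constancy of the base scheme gives $mD\sim m'D$ and the contradiction with strict nefness is immediate without any fibration; (ii) $\phi_m$ is defined by the subsystem $\pi^*|mD|-F\subseteq |M_m|$ rather than by a complete linear system, so one should observe that its fibers contain those of $\phi_{|m\pi^*D|}$, whence a general Iitaka fiber of $\pi^*D$ is still contracted by $\phi_m$ (this also disposes of any normality issue on $X$, since all fibration-theoretic statements are made on the smooth model $\tilde X$).
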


In the opposite direction, by closely following \cite{H}, Chapter I., Example 10.8 and \cite{BS}, Example 1.2, we adapt Ramanujam construction and define an inductive procedure to build a strictly nef divisor which is effective but not big (see Example \ref{ramanujam}). This approach 
works by induction on the dimension of the ambient projective manifold and what is needed 
to obtain a negative answer to Question \ref{big} is indeed the base of the induction. This is provided in Appendix \ref{andreas} by Andreas H\"oring, hence both Question \ref{big} and Question \ref{zero} have a negative answer in any dimension $n \ge 3$. 


\medskip

{\bf Acknowledgements:} 
The author is grateful to Edoardo Ballico, Fr\'ederic Campana, Paolo Cascini, and Thomas Peternell for their helpful remarks. 
The author is a member of GNSAGA of the Istituto Nazionale di Alta Matematica "F. Severi". 

\section{The proofs}

\noindent \textit{Proof of Theorem \ref{main}.} We argue by induction on $n$, the case $n=1$ being obvious. 
If $D$ is an effective strictly nef divisor on a projective variety $V$ of dimension $n$, 
by assumption we have $h^0(V, mD ) \ge 2$ for some $m \ge 1$. 
Let $E = \sum a_i E_i$ be an effective divisor linearly equivalent to $mD$. 
Since $h^0(V, mD ) \ge 2$ there exists $i$ such that $D$ restricts to an effective strictly nef Cartier divisor on $E_i$. 
From the inductive assumption applied to the projective variety $E_i$ of dimension $n-1$ we obtain that the restriction 
of $D$ to $E_i$ is big. On the other hand, if $D$ is not big then 
$$
0 = D^n = mD^n = D^{n-1}.E = \sum a_i D^{n-1}.E_i
$$
Since $D$ is nef and $E$ is effective it follows that $D^{n-1}.E_i = 0$ for every $i$, in particular the restriction 
of $D$ to $E_i$ is not big. This contradiction ends the proof.

\qed

\noindent \textit{Proof of Theorem \ref{affine}.} The complement $U$ of the support of 
the effective strictly nef Cartier divisor $D$ does not contain complete curves, hence from
\cite{H}, Chapter II., Theorem 5.1 and the Remarks following its statement, we deduce that $U$ is affine. Now the claim is a direct consequence of Goodman's criterion (\cite{H}, Chapter II., Theorem 6.1), see for instance the 
statement of Theorem 3.1 in \cite{Z}: indeed, it follows easily from the remarks after the statement of Theorem 2.1 on p. 803 and at the beginning of the proof of Theorem 3.1 on p. 808.

\qed

\begin{example}\label{ramanujam}
Let $Y$ be a projective manifold of dimension $n-1 \ge 3$ with an effective strictly nef divisor $D$ such that $D^{n-1} =0$.

Define $X := \mathbb{P}(\mathcal{O}_Y(D) \oplus \mathcal{O}_Y)$. If $X_0$ is the zero section of the projection $\pi: X \to Y$, then $X_0$ corresponds to the tautological line bundle on $X$ and by arguing as in \cite{BS}, Example 1.2, we compute 
\begin{equation}\label{one}
X_0^i = D^{i-1}_{X_0}
\end{equation}
for every $i$ with $2 \le i \le n$. 

Define $E := X_0 + \pi^*D$. From (\ref{one}) with $i=2$ we deduce 
\begin{equation}\label{two}
E_{X_0} = (X_0 + \pi^*D)_{X_0} = 2 D_{X_0}.
\end{equation}

Then the following holds: 

(i) $E$ is effective because $D$ is effective. 

(ii) $E$ is strictly nef. Indeed, let $C$ be a curve on $X$. 

If $C$ is a fibre of $\pi$, then by the projection formula we have $E.C = X_0.C + \pi^*D.C = X_0.C + D.\pi_*C = 1 + 0 > 0$. 

If $C \subset X_0$, then by (\ref{two}) we have $E.C = E_{X_0}.C_{X_0} = 2 (D.C)_{X_0} > 0$.

If $C \not \subset X_0$ and $\pi(C)$ is a curve in $Y$, then by the projection formula we have 
$E.C = X_0.C + \pi^*D.C = X_0.C + D.\pi_*C > 0$ since $X_0.C \ge 0$ and $D.\pi_*C > 0$.

(iii) $E$ is not big. Indeed, we have $E^n = (X_0 + \pi^*D)^n = 0$
because $\pi^*D^n=0$, $X_0.(\pi^*D)^{n-1} = D^{n-1}_{X_0} = 0$, and 
$X_0^i. (\pi^*D)^{n-i} = 0$ for every $i$ with $2 \le i \le n$ by (\ref{one}).
\end{example}

\section{Appendix by Andreas H\"oring}\label{andreas}

Let $\holom{\pi}{\PP(V)}{C}$ be a Mumford example, i.e.
$C$ is a smooth projective curve of genus $g>1$ and 
$V$ is a rank two vector bundle on $C$ such that $c_1(V)=0$
and
$$
H^0(C, S^m V) = 0 \qquad \forall \ m \in \mathbb{N}.
$$
Then it is known \cite[Ex.1.5.2]{L} that the tautological class $c_1(\sO_{\PP(V)}(1))$ is strictly nef, but not big.
Observe that by Serre duality and Riemann-Roch
$$
h^1(C, V^*) = h^0(C, K_C \otimes V) \geq \chi(C, K_C \otimes V) = c_1(K_C \otimes V) + 2 \chi(\sO_C) = 2 (g-1)>0,
$$
so there exists an extension
$$
0 \rightarrow \sO_C \rightarrow W \rightarrow V \rightarrow 0
$$
such that the extension class
$\eta \in H^1(C, \sO_C \otimes V^*)$
is not zero. Let
$
\holom{p}{X:=\PP(W)}{C}$
be the projectivisation, and observe that it contains
$$
Y := \PP(V) \subset \PP(W) = X
$$
as a prime divisor such that $[Y]=c_1(\sO_{\PP(W)}(1))$.

\begin{proposition}
The prime divisor $Y \subset X$ is effective, strictly nef, but not big.
\end{proposition}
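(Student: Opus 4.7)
The plan is to verify the three properties of $Y$ in turn, treating effectivity and non-bigness as essentially formal consequences of the setup and leaving strict nefness as the real content. Effectivity is automatic, as $Y$ is a prime divisor (it is simply $\PP(V) \subset \PP(W)$ coming from the quotient $W \twoheadrightarrow V$). For non-bigness, since $[Y]=c_1(\sO_{\PP(W)}(1))$ and $p_* \sO_{\PP(W)}(m) = S^m W$, one has $h^0(X, mY) = h^0(C, S^m W)$. The extension $0 \to \sO_C \to W \to V \to 0$ induces a natural filtration on $S^m W$ whose successive graded pieces are $S^{m-j} V$ for $j=0,\dots,m$; using $H^0(C, S^k V) = 0$ for $k \ge 1$ and iterating the associated long exact sequences collapses everything down to $h^0(C, S^m W) \le 1$, so $Y$ cannot be big.

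For strict nefness, let $\Gamma \subset X$ be any irreducible curve. I would first dispose of the two easy cases. If $\Gamma \subset Y$, then $Y \cdot \Gamma = c_1(\sO_{\PP(V)}(1)) \cdot \Gamma > 0$ because the Mumford example is strictly nef. If $\Gamma \not\subset Y$ but $p(\Gamma)$ is a point, then $\Gamma$ lies in a fiber $F \cong \PP^2$ and intersects the hyperplane $F \cap Y$ in $\deg \Gamma > 0$ points. The remaining case is $\Gamma \not\subset Y$ with $p(\Gamma) = C$. Let $\nu: \tilde{\Gamma} \to \Gamma$ denote the normalization and set $g := p \circ \nu: \tilde{\Gamma} \to C$, which is finite and surjective. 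The induced morphism $\tilde{\Gamma} \to \PP(W)$ corresponds to a line-bundle quotient $g^* W \twoheadrightarrow L$, and $Y \cdot \Gamma = \deg L$.

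In this remaining case the hypothesis $\Gamma \not\subset Y$ translates exactly into the non-vanishing of the composition $\sO_{\tilde{\Gamma}} \hookrightarrow g^*W \twoheadrightarrow L$: were this composition zero, the quotient would factor through $g^*V$ and the image of $\tilde{\Gamma}$ would lie in $\PP(V) = Y$. A non-zero map $\sO_{\tilde{\Gamma}} \to L$ is a section of $L$, so $\deg L \ge 0$. Assuming $\deg L = 0$ for contradiction, $\sO_{\tilde{\Gamma}} \to L$ is an isomorphism and gives a splitting of the pulled-back extension $0 \to \sO_{\tilde{\Gamma}} \to g^* W \to g^* V \to 0$. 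The contradiction I would aim for is then that this pulled-back extension is classified by $g^* \eta \in H^1(\tilde{\Gamma}, g^* V^*)$, and the injectivity of $g^*$ on cohomology --- available in characteristic zero via the trace map, since $g$ is finite surjective of positive degree --- forces $g^* \eta \ne 0$. The main obstacle, as I see it, is precisely this last step: one must simultaneously read off ``$\Gamma \not\subset Y$'' as non-vanishing of the composition $\sO_{\tilde{\Gamma}} \to L$ \emph{and} preserve non-splitness of Serre's original extension under an arbitrary finite cover $g: \tilde{\Gamma} \to C$.
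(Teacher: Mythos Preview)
Your argument is correct and complete, but it differs from the paper's in two places.

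For non-bigness, the paper simply observes that $Y^3 = c_1(\sO_{\PP(W)}(1))|_Y^2 = c_1(\sO_{\PP(V)}(1))^2 = 0$; since $Y$ is nef this suffices. Your filtration argument on $S^m W$ is more elementary (no intersection theory needed) and in fact proves the stronger statement $h^0(X,mY)\le 1$ for all $m$, i.e.\ $\kappa(Y)=0$.

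For strict nefness on curves $\Gamma \not\subset Y$, the paper does not split into the fibral and dominant cases at all. It argues in one stroke: if $Y\cdot\Gamma=0$ then (as $Y$ is effective and $\Gamma\not\subset Y$) the curve $\Gamma$ is disjoint from $Y$, hence $\Gamma\subset X\setminus Y$; but a lemma of H\"oring--Peternell says that when the extension class $\eta\neq 0$ the open set $X\setminus Y$ contains no positive-dimensional compact subvariety, contradiction. Your approach is essentially a direct proof of the curve case of that lemma: you translate $\Gamma\not\subset Y$ into non-vanishing of $\sO_{\tilde\Gamma}\to L$, force $L\cong\sO_{\tilde\Gamma}$ when $\deg L=0$, read this as a splitting of $g^*$ of the extension, and kill it with the trace map. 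The ``main obstacle'' you flag is not an obstacle: injectivity of $g^*$ on $H^1(C,V^*)$ for a finite surjective $g$ in characteristic zero is exactly what the trace splitting of $\sO_C\to g_*\sO_{\tilde\Gamma}$ gives, so $g^*\eta\neq 0$ and the pulled-back extension is genuinely non-split. Your route is longer but self-contained; the paper's is shorter but imports the key geometric fact from an external reference.
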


\begin{proof}
Let $C \subset X$ be an irreducible curve. If $C \subset Y$ then
$$
Y \cdot C = c_1(\sO_{\PP(W)}(1))|_Y \cdot C =
c_1(\sO_{\PP(V)}(1)) \cdot C > 0
$$
since $c_1(\sO_{\PP(V)}(1))$ is strictly nef. Thus $Y$ is nef and strictly nef on all curves contained in its support.
Clearly $Y$ is not big, since
$$
Y^3 = c_1(\sO_{\PP(W)}(1))|_Y^2 = c_1(\sO_{\PP(V)}(1))^2=0.
$$
Assume now that there exists an irreducible curve such that $Y \cdot C=0$. Since $C \not\subset Y$ this implies that $C \subset (X \setminus Y)$.
Yet $\eta \neq 0$, so by \cite[Lemma 3.9]{HP} the complex manifold
$X \setminus Y$ contains no positive-dimensional compact subvarieties of positive dimension, a contradiction.

\end{proof}

\footnotesize


\begin{thebibliography}{99}

\bibitem{BS} \textsc{Beltrametti M. C. and Sommese A. J.}, \textit{Remarks on numerically positive and big line bundles}, in \textit{Projective geometry with applications}, 9--18, Lecture Notes in Pure and Appl. Math. {\bf 166}, Dekker, New York, 1994. 

\bibitem{CCP} \textsc{Campana F., Chen J. A., Peternell Th.}, \textit{Strictly nef divisors}, Math. Ann. {\bf 342} (2008), 565--585. 

\bibitem{C} \textsc{Chaudhuri P.}, \textit{Strictly nef divisors and some remarks on a conjecture of Serrano}, arXiv:2008.05009 (2020).

\bibitem{H}  \textsc{Hartshorne R.}, \textit{Ample subvarieties of algebraic varieties}, Springer-Verlag, Berlin-New York, 1970. 

\bibitem{HP} \textsc{H\"oring A. and Peternell Th.}, \textit{Stein complements in compact K\"ahler manifolds}, arXiv:2111.03303 (2021).

\bibitem{L} \textsc{Lazarsfeld R.}, \textit{Positivity in algebraic geometry. I. Classical setting: line bundles and linear series}, Ergebnisse der Mathematik und ihrer Grenzgebiete. 3. Folge. A Series of Modern Surveys in Mathematics {\bf 48}, Springer-Verlag, Berlin, 2004.

\bibitem{LM} \textsc{Liu H. and Matsumura Si.}, \textit{Strictly nef divisors on K-trivial fourfolds}, Math. Ann. {\bf 387} (2023), 985--1008. 

\bibitem{LOWYZ} \textsc{Liu J., Ou W., Wang J., Yang X., Zhong G.}, \textit{Algebraic fibre spaces with strictly nef relative 
anti-log canonical divisor}, arXiv:2111.05234 (2021). 

\bibitem{S} \textsc{Serrano F.}, \textit{Strictly nef divisors and Fano threefolds}, J. Reine Angew. Math. {\bf 464} (1995), 187--206. 

\bibitem{Z} \textsc{Zhang J.}, \textit{Algebraic Stein varieties}, Math. Res. Lett. {\bf 15} (2008), 801--814.

\end{thebibliography}
\end{document}